\documentclass{birkjour}
\usepackage{amsfonts, amsthm, amsmath, amssymb}
\usepackage{float}
\usepackage[colorlinks=true,
            linkcolor=black,
            citecolor=black,
            urlcolor=black]{hyperref}
 \newtheorem{thm}{Theorem}[section]
 
 \newtheorem{lem}[thm]{Lemma}
 \newtheorem{prop}[thm]{Proposition}
 \theoremstyle{definition}
 \newtheorem{defn}[thm]{Definition}
 \theoremstyle{remark}
 
 \newtheorem*{ex}{Example}
 \numberwithin{equation}{section}

\newcommand{\C}{\mathbb C}
\newcommand{\R}{\mathbb R}
\newcommand{\N}{\mathbb N}

\newcommand{\Rf}{\mathfrak{R}}
\newcommand{\A}{\mathcal A}

\newcommand{\Rr}{\mathcal R}
\newcommand{\Ll}{\mathcal L}

\newcommand{\Ss}{\mathcal{S}}

\renewcommand{\geq}{\geqslant}

\DeclareMathOperator{\dom}{dom}
\DeclareMathOperator{\ran}{ran}

\DeclareMathOperator{\mul}{mul}

\begin{document}

%-------------------------------------------------------------------------
% editorial commands: to be inserted by the editorial office
%
%\firstpage{1} \volume{228} \Copyrightyear{2004} \DOI{003-0001}
%
%
%\seriesextra{Just an add-on}
%\seriesextraline{This is the Concrete Title of this Book\br H.E. R and S.T.C. W, Eds.}
%
% for journals:
%
%\firstpage{1}
%\issuenumber{1}
%\Volumeandyear{1 (2004)}
%\Copyrightyear{2004}
%\DOI{003-xxxx-y}
%\Signet
%\commby{inhouse}
%\submitted{March 14, 2003}
%\received{March 16, 2000}
%\revised{June 1, 2000}
%\accepted{July 22, 2000}
%
%
%
%---------------------------------------------------------------------------
%Insert here the title, affiliations and abstract:
%

\title[Hermitian Pencils and their Representation]
 {Hermitian Pencils and their Representation in Krein Spaces}

%----------Author 1

\author{Rabeb~Aydi}

\address{%
University of Sfax, Department of Mathematics\\
BP1171, Sfax 3000\\
Tunisia}

\email{rabeb.aydi.2019@gmail.com}
\author{Omaima Kchaou}

\address{%
University of Sfax, Department of Mathematics\\
BP1171, Sfax 3000\\
Tunisia}
\email{kchaouomaima53@gmail.com}

%----------Author 2
\author{Carsten~Trunk}
\address{Institute of Mathematics, , TU Ilmenau\br
 Weimarer Straße 25, 98693 Ilmenau\br
Germany}
\email{carsten.trunk@tu-ilmenau.de}
%----------classification, keywords, date
\subjclass{Primary 46C20; Secondary 47A06, 47B50}

\keywords{linear relations; operator pencils; Krein spaces; nonnegative, negative squares, spectrum}

\dedicatory{In tribute to Heinz Langer – unique personality; a pioneer of operator pencils and the prince of indefinite inner product spaces}

\begin{abstract}
Pencils of the form  $\mathcal{A}(\lambda)=\lambda E-A$ are studied, where 
$A$ and $E$ are bounded linear operators on a Hilbert space. 
Of interest are the spectral properties of $\mathcal{A}(\lambda)$. This is done via a  corresponding linear relation in a Krein space, which is given in range representation using the two operators $A$ and $E$.
Under some assumptions on $E$ and $A$, the linear relation in range representation is nonnegative or has finitely many negative squares. Then one uses  spectral properties of
linear relations and deduces spectral properties of the operator pencil
$\mathcal{A}(\lambda)=\lambda E-A$.
\end{abstract}
%%% ----------------------------------------------------------------------
\maketitle
%%% 
\section{Introduction}
Spectral theory of operator pencils is a rich field in mathematics with various applications. Many authors have made important contributions to the field, including Keldysh, Krein, Langer, and Markus. There exists a vast amount of literature on this topic, e.g., see \cite{K51, KL78II, M88, MV15} and the references therein. 
One of the main methods in the spectral theory of pencils is the method of linearization, which replaces the study of nth order pencils acting in the Hilbert space $X$ by the study of equivalent problems for a companion linear pencil acting in the space $X^n$, see \cite{M88}.

A new direction in the development of the spectral theory of pencils are the investigations of Krein and  Langer for a quadratic operator pencil, \cite{ALM01, KL78I, KL78II, L76}. A particularly notable aspect of Krein's and especially of Langer’s influence was bringing operator pencil theory into the setting of Krein and Pontryagin spaces, see, e.g., \cite{L67,L73,L74,L75,L76}. 

In this 
paper we follow this path and develop a spectral theory for first order pencils of the form  
 \begin{equation}\label{Rakete}
     \mathcal{A}(\lambda) := \lambda E-A,
 \end{equation}
 where the operators $E$ and $A$ are bounded on the Hilbert space $X$. 

Operator pencils of first order are frequently used to model differential
algebraic equations \begin{align}\label{Eqdiff}
    \frac{d}{dt} E x(t)= A x(t), \quad E x(0) = E x_0,
\end{align}
 which can be viewed as coupled equations of differential equations subject to linear constraints. Differential algebraic equations of the form \eqref{Eqdiff} arise naturally in numerous applications from mechanics and electrodynamics \cite{FY98, RT05, TW19}. In finite dimension (i.e., $E$ and $A$ are matrices) this is a very active field, we mention here only the monographs 
 \cite{KM06} and \cite{LMT13}.
 
 The main objective of this paper is to investigate the spectral properties of the operator pencils  $\mathcal{A}(\lambda)$.
This is achieved through a corresponding linear relation $\Ll$,
\begin{equation} \label{range-kernel}
	\mathcal{L} = \ran\begin{bmatrix}
			E\\ A
			\end{bmatrix}, % = \text{Ker} \begin{bmatrix}
			%G, F
			%\end{bmatrix}.
\end{equation} 
which is called the range representation. Here the linear relation $\Ll$ is understood as a subspace of the cartesian product $X\times X$ of the underlying Hilbert space $X$.

In the special case that $E$ is boundedly invertible one multiplies $(\ref{Eqdiff})$ from the right by $E^{-1}$ and obtains (by adding an initial value) a standard Cauchy problem with generator  $AE^{-1}$. 

If $E$ is no longer invertible, this procedure can be repeated, where the inverse and the
 multiplication are in the sense of linear relations
 (for details see Section~\ref{Prelim}). In particular, one easily computes \begin{align}
     A E^{-1} =\ran\begin{bmatrix}
			E\\ A \end{bmatrix}.
 \end{align} 

\noindent The spectrum of the pencil $\A$ is given by
\begin{equation*}
   \sigma(\A):= \{ \lambda \in \C \mid \lambda E-A \hbox{ is not bijective} \},
\end{equation*} 
and $\infty\in\sigma(\A) $ if $E$ is not boundedly invertible.
Here we will always assume that there exists a real $\lambda \notin \sigma(\A)$, which corresponds to so called regular pencils. We denote the resolvent set of $\A(\lambda)$ by $\rho(\A)$, defined as the complement of the spectrum $\sigma(\A)$ in $\C\cup \{\infty\}$.

The expression $\Ll-\lambda I$ which is usually abbreviated as $\Ll-\lambda$, with \begin{equation}\label{Def: L-lambda}
	\Ll - \lambda := \{\{x,y-\lambda x\}\mid \{x,y\}\in \Ll\}.
\end{equation}
The spectrum of a linear relation $\Ll$ is defined as 
$$
\sigma(\Ll):=\{\lambda \in \C  \mid \Ll -\lambda I \ \text{is not the graph of a bijective operator}\}.
$$
Furthermore, $\infty\in \sigma(\Ll) $ if $\Ll$ is not the graph of a bounded operator. The resolvent set, denoted by $\rho(\Ll)$, is the complement of the spectrum $\sigma(\Ll)$ in $\C\cup \{\infty\}$.

Combining \eqref{range-kernel} and \eqref{Def: L-lambda} yields the link between the operator pencil and its associated linear relation \cite{GT21},
\begin{equation}\label{Arnstadt}
    \sigma (\A)=\sigma (\Ll).
\end{equation}

In this paper, we address the question of what can be said about the spectrum of operator pencils $\A(\lambda)$ under additional assumptions on $E$ and $A$ (e.g., hermitian).
For this we introduce a Krein space structure on $X$ via 
 $$ 
 [x, y]=(Jx,y), \ \hbox{with} \ x, y \in X \ \hbox{and} \ \ J= (\lambda E-A)^{-1},  
 $$ 
 where $(\cdot, \cdot)$ stands for the inner product in the Hilbert space $X$.  The space $X$, endowed with
the indefinite inner product $[\cdot,\cdot]$ generated by $J$ as above, is called a Krein space, see \cite{B74, IKL}. 
As our first main result, we show that the corresponding linear relation $\Ll$ in range representation is selfadjoint with respect to $[\cdot, \cdot]$. 

 As the second main result we show that $AJE$ is nonnegative, if and only if $\Ll$ is nonnegative with respect to $[\cdot, \cdot]$. Thus, since every nonnegative selfadjoint relation in a Krein space has real spectrum, combined with \eqref{Arnstadt}, gives $\sigma(\A)\subset \R$. 

Next, we concentrate on a subclass of selfadjoint linear relations  with $\kappa$ negative squares.  In our last main result we show that $AJE$ has $\kappa$ negative eigenvalues, if and only if  the linear relation $\mathcal{L}$ has finitely many negative squares. Hence, the spectrum of $\A(\lambda)$ is real with the exception of finitely many non-real eigenvalues, which occur in complex conjugate pairs.

\section{Preliminaries}\label{Prelim}
Let $X$ be a Hilbert space with inner product $(\cdot, \cdot)$ and let $J$ be a self-adjoint operator in $X$  which satisfies
\begin{equation}\label{Numero}
0\in \rho(J).
\end{equation}
Endow the space $X$ with the Hermitian sesquilinear form 
\begin{equation}\label{NumeroUno}
[\cdot, \cdot]= (J\cdot, \cdot) 
\end{equation}
 and, using the spectral projections corresponding to the intervals $(-\infty,0)$ and $(0,\infty)$, one obtains the direct and $[\cdot, \cdot]$-orthogonal decomposition of $X$,
$$
X = X_-[\dotplus] X_+,
$$
which is a so-called fundamental decomposition and, hence,
$(X, [\cdot,\cdot])$ is a Krein space, see \cite{AI89,B74,G22}.
If the dimension of $X_-$ ($X_+$) is $\kappa$, we say that the rank of negativity (resp.\ positivity) equals $\kappa$. Such spaces are then called Pontrjagin spaces. For the basic theory for Krein- and Pontrjagin spaces we refer to the monographs \cite{AI72,B74,G22,IKL}.

%We consider a Krein space inner product on $X^2:=X \times X$, \begin{align*}
%    [\{x,y\}, \{z,t\}]= [x, z]+[y,t], \ \ \{x,y\}, \{z,t\} \in X^2. \end{align*}
A linear subspace $\Ll$ of $X^2$ will be called linear relation in $X$. A closed linear relation $\Ll$ in $X$ is a closed linear subspace of $X^2$. Moreover, a closed linear operator in $X$ is viewed as a closed linear relation via its graph in $X^2$. We recall some basic notions for linear relations. For further details, see for example \cite{A61,DS1,DS2, LT77}. The domain and the kernel of a linear relation $\Ll$ in $X$ is denoted  by $\dom\Ll$ and $\ker\Ll$, respectively, \begin{eqnarray*}
	\dom\Ll&:= &\{x\in X ~ |~ \{x,y\} \in \Ll ~ \text{for some} ~ y\in X\},\\
	\ker\Ll&:= &\{x\in X ~ |~ \{x,0\} \in \Ll\}. \end{eqnarray*}
The inverse $\Ll^{-1}$ of $\Ll$ always exists and is defined as
\begin{align*}
    \Ll^{-1} :=\{\{y,x\}~|~  \{x,y\} \in \Ll \}.
\end{align*}
Moreover, $\ran\Ll$ and $\mul\Ll$ denote the range and the multivalued part of $\Ll$, 
$$
    \ran\Ll:= \dom \Ll^{-1},\quad \mbox{and} \quad
	\mul\Ll:= \ker \Ll^{-1}.
$$
Consider the linear relations $\Ll$ and $\Ss$ in $X$, then their sum and product is defined as
\begin{eqnarray*}
    \Ll+ \Ss &:=&\{\{x,y_1+y_2\}~|~ \{x,y_1\} \in \Ll, \ \{x,y_2\} \in \Ss \}, \\
    \Ll \Ss &:=&\{\{x,z\}~|~  \{x,y\} \in \Ss, \ \{y,z\} \in \Ll \}.
\end{eqnarray*}
The point spectrum $\sigma_p(\Ll)$ is given by
\[
\sigma_p(\Ll):=\{\lambda\in\C~|~ \ker(\Ll - \lambda)\neq\{0\}\}.
\] In addition, $\infty\in\sigma_p(\Ll)$ if $\mul \Ll \neq \{0\}.$
The adjoint relation (with respect to $[\cdot, \cdot]$) of $\Ll$ is defined by
\begin{align*}
    \mathcal{L}^{+}:=\{\{x,y\}\in X^2~|~ [x,t]=[y,z], \ \ \forall\ \{z,t\} \in \mathcal{L} \}.
    \end{align*}
A linear relation $\Ll$ is said to be symmetric (selfadjoint) with respect to $[\cdot, \cdot]$, if $\Ll \subseteq \Ll^{+}$
($\Ll = \Ll^{+}$, respectively). Furthermore, a linear relation $\mathcal{L}$ in $X$ is called nonnegative if \begin{align*}
    [f',f]\geq 0, \ \ \{f,f'\}\in \mathcal{L}.
\end{align*}
 A closed symmetric relation $\Ll$ in the Krein space  $(X, [\cdot, \cdot])$ is said to have $\kappa$ negative squares, $\kappa \in \mathbb{N}_0,$ if the Hermitian form $\langle\cdot, \cdot\rangle $ on $\Ll$, defined by \begin{align*}
    \langle\{f,f'\}, \{g,g'\}\rangle  := [f,g']= [f',g], \ \ \ \  \{f,f'\} ,\ \{g,g'\}\in \Ll, 
\end{align*}
    has $\kappa$ negative squares, that is, there exists a $\kappa$-dimensional subspace $\mathcal{M}$ of the symmetric relation $\Ll$ such that  
    $$
    [f', f] <0 
    $$
    for all $\hat{f}=\{f,f'\}\in \mathcal{M}$, $\hat{f}\neq 0$, but no $\kappa+1$-dimensional subspace with this property, see \cite{BLT13}.

\section{Operator pencils and their range representations}
We consider the operator pencil defined as 
$$
\A(\lambda)= \lambda E-A, 
$$ 
where the operators $E$ and $A$ are bounded on the Hilbert space $X$ with $E=E^{*}$ and $A=A^{*}$.  Here we will always assume that there exists a real $\lambda \notin \sigma(\A)$. Hence,
$$
J:= (\lambda E-A)^{-1}
$$ 
together with the inner product $[\cdot,\cdot]$ in \eqref{NumeroUno} is defined. Then $(X, [\cdot,\cdot])$
is a Krein space and $\Ll^+$ is the Krein space adjoint of $\Ll$ (see Section \ref{Prelim}). The next lemma states some elementary properties.

\begin{lem} \label{Properties} 
Let $\mathcal{L}$  be a linear relation as defined in \eqref{range-kernel}. Then,
\begin{equation} \label{Krein Adjoint of L} 
\mathcal{L}^{+} = \{\{x,y\}\in X \times X~|~ (JA)^{*}x=(JE)^{*}y \}.
\end{equation} 
Moreover,  
\begin{align} \label{dom}
    \dom \mathcal{L}^{+}=\{x\in X~|~ (JA)^{*}x\in \ran (JE)^{*}\},
\end{align} 
\begin{align} \label{ran}
 \ran \mathcal{L}^{+}=\{y\in X~|~ (JE)^{*}y\in \ran (JA)^{*}\}.
\end{align}
\end{lem}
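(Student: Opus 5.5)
The plan is to unwind the definition of the Krein space adjoint directly, using that $\Ll$ is given in range representation. By \eqref{range-kernel}, every element of $\Ll$ has the form $\{z,t\}=\{Eu,Au\}$ with $u\in X$ arbitrary. Hence $\{x,y\}\in\Ll^+$ if and only if
\begin{equation*}
[x,Au]=[y,Eu]\quad\text{for all } u\in X .
\end{equation*}
By \eqref{NumeroUno}, and since $J$ is selfadjoint in the Hilbert space $X$ (it is the inverse of the selfadjoint bounded operator $\lambda E-A$ with real $\lambda$), this reads $(Jx,Au)=(Jy,Eu)$. Moving the operators across the Hilbert space inner product gives $(A^*Jx,u)=(E^*Jy,u)$ for all $u\in X$.

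Since $u$ ranges over all of $X$, this is equivalent to $A^*Jx=E^*Jy$. Next we use $(JA)^*=A^*J^*=A^*J$ and $(JE)^*=E^*J$, which hold because all operators involved are bounded. This yields exactly \eqref{Krein Adjoint of L}. Note that the additional hypotheses $A=A^*$ and $E=E^*$ are not even needed here; only the selfadjointness of $J$ enters, and it also follows from $J^{-1}=\lambda E-A$ being bounded and selfadjoint.

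The descriptions \eqref{dom} and \eqref{ran} then follow by reading off \eqref{Krein Adjoint of L}. A vector $x$ lies in $\dom\Ll^+$ if and only if some $y$ satisfies $(JE)^*y=(JA)^*x$, that is, if and only if $(JA)^*x\in\ran(JE)^*$. Symmetrically, $y\in\ran\Ll^+$ if and only if some $x$ satisfies $(JA)^*x=(JE)^*y$, that is, if and only if $(JE)^*y\in\ran(JA)^*$.

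There is no real obstacle. The only point needing care is the bookkeeping of the order of the operators in the adjoints, namely $(JA)^*=A^*J$ rather than $JA^*$. One must also observe that the test family $\{Eu,Au\}$ exhausts $\Ll$, so the condition holding for all $u$ is genuinely equivalent to membership in $\Ll^+$.
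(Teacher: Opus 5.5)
Your proof is correct and follows the paper's argument. Both test the adjoint condition against the generic element $\{Eu,Au\}$ of $\Ll$, rewrite it via $[\cdot,\cdot]=(J\cdot,\cdot)$, use that $u$ ranges over all of $X$, and then read off $\dom\Ll^+$ and $\ran\Ll^+$; the only cosmetic difference is where selfadjointness of $J$ enters, since the paper first passes to the conjugated identity $[Au,x]=[Eu,y]$ so that $(JAu,x)=(u,(JA)^*x)$ appears directly, whereas you keep $[x,Au]=[y,Eu]$ and convert $A^*J$ into $(JA)^*$ using $J=J^*$.
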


\begin{proof}
By definition, the linear relation $\Ll$ can be written as $$\mathcal{L} = \textup{ran}\begin{bmatrix}
			E\\ A
			\end{bmatrix}=\{\{
			Ez, Az\}
			 ~|~ \ z\in X\}.$$
Let $\{x,y\}\in \mathcal{L}^{+}$. Thus,  $[v,x]=[u,y]$ for all $\{u,v\} \in \mathcal{L}$ and hence  
\begin{eqnarray} \nonumber     
 [Az,x]=[Ez,y] &\Leftrightarrow &  (JAz,x)=( JEz,y), \ \ \forall \ z \in X \\[1ex]\label{Freitag}
 &\Leftrightarrow & ( z,(JA)^{*} x)= ( z,(JE)^{*} y), \ \ \forall \ z \in X.
\end{eqnarray} 
Therefore, \eqref{Krein Adjoint of L}  follows.
By $\eqref{Krein Adjoint of L}$, one immediately obtains \eqref{dom} and \eqref{ran}.
\end{proof}
 
\begin{lem}  \label{Range representation}
 Let $\mathcal{L}$  be a linear relation as defined in \eqref{range-kernel}. Then 
 \begin{align}\label{Equation 6}
       AE^{-1}= \mathcal{L}= \textup{ran}\begin{bmatrix}
			EJ\\ AJ
			\end{bmatrix}= \textup{ran}\begin{bmatrix}
			EJ\\ -I+\lambda EJ
			\end{bmatrix},
    \end{align} 
 Further, the following holds.
 \begin{enumerate}
    \item $\mathcal{L}$ is closed in $X\times X.$
    \item $\ker\Ll^+= (\ran JA) ^\perp$ and $\mul\Ll^+= (\ran JE) ^\perp.$
    \item  $\ker\Ll= (\ran JAJ) ^\perp$ and $\mul\Ll= (\ran JEJ) ^\perp.$
\end{enumerate} 
\end{lem}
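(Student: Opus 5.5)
The plan is to verify the chain of equalities in \eqref{Equation 6} directly from the definitions of Section~\ref{Prelim}, and then read off items (1)--(3) from the second range representation together with Lemma~\ref{Properties}. Throughout I will use that $\lambda$ is real and $E=E^*$, $A=A^*$. Hence $J^{-1}=\lambda E-A$ is selfadjoint and boundedly invertible, so $J=J^*$.

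\emph{The identities \eqref{Equation 6}.} By definition $E^{-1}=\{\{Ez,z\}\mid z\in X\}$. The product of relations then gives $AE^{-1}=\{\{Ez,Az\}\mid z\in X\}=\Ll$. Since $J$ is a bijection of $X$, substituting $z=Jw$ yields $\Ll=\ran\begin{bmatrix} EJ\\ AJ\end{bmatrix}$. Finally, $A=\lambda E-J^{-1}$ gives $AJ=\lambda EJ-I$, which is the last representation.

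\emph{Item (1).} From the last representation, $\Ll=\{\{EJw,\lambda EJw-w\}\mid w\in X\}$. Let $EJw_n\to x$ and $\lambda EJw_n-w_n\to y$. Then $w_n\to \lambda x-y=:w$. By continuity of $EJ$ we get $x=EJw$ and $y=\lambda EJw-w$, so $\{x,y\}\in\Ll$ and $\Ll$ is closed. Equivalently, $(\Ll-\lambda)^{-1}=-EJ$ is a bounded everywhere defined operator, so it is closed, and hence $\Ll$ is closed.

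\emph{Item (2).} This is immediate from \eqref{Krein Adjoint of L}. Indeed, $x\in\ker\Ll^+$ iff $(JA)^*x=0$, i.e.\ $x\in\ker(JA)^*=(\ran JA)^\perp$. Likewise, $y\in\mul\Ll^+$ iff $(JE)^*y=0$, i.e.\ $y\in(\ran JE)^\perp$.

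\emph{Item (3).} This is the only step needing some care. Since $J$, $A$, $E$ are selfadjoint, $(JAJ)^*=JAJ$, so $(\ran JAJ)^\perp=\ker JAJ=\ker AJ=J^{-1}\ker A$. For $v\in\ker A$ we have $J^{-1}v=\lambda Ev-Av=\lambda Ev$, hence $(\ran JAJ)^\perp=\lambda E(\ker A)$. On the other hand, from $\Ll=\{\{Ez,Az\}\}$ we get $\ker\Ll=\{Ez\mid Az=0\}=E(\ker A)$.
\begin{itemize}
\item If $\lambda\neq0$, the two sets $\lambda E(\ker A)$ and $E(\ker A)$ coincide.
\item If $\lambda=0$, then $J^{-1}=-A$ is invertible, so $\ker A=\{0\}$ and both sides are $\{0\}$.
\end{itemize}
The multivalued part is handled the same way. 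We have $\mul\Ll=\{Az\mid Ez=0\}=A(\ker E)$, and $(\ran JEJ)^\perp=\ker EJ=J^{-1}\ker E$. For $v\in\ker E$, $J^{-1}v=-Av$, so $J^{-1}\ker E=A(\ker E)$, with no case distinction needed.

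The main subtlety is this bookkeeping in (3), in particular the degenerate case $\lambda=0$; everything else is direct substitution.
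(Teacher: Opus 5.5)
Your proof is correct. For the identities \eqref{Equation 6}, for item~(1) and for item~(2) it coincides with the paper's argument. In item~(1) this is the same sequence argument, where $w_n=\lambda x_n-y_n$ is forced to converge; item~(2) is read off from \eqref{Krein Adjoint of L}.

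The genuine difference is item~(3). There the paper only says that it follows ``in a similar way as in 2''. Presumably this means reading $\ker\Ll=\ker AJ=\ker JAJ$ and $\mul\Ll=\ker EJ=\ker JEJ$ off a kernel representation $\Ll=\{\{x,y\}\mid AJx=EJy\}$. In the paper that representation only becomes available later, in the proof of Theorem~\ref{Self-adjoint linear relation}, by combining $\Ll^+=\{\{x,y\}\mid AJx=EJy\}$ with $\Ll=\Ll^+$.

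You instead compute $\ker\Ll=E(\ker A)$ and $\mul\Ll=A(\ker E)$ directly from the range representation. You then identify them with $\ker AJ=(\lambda E-A)\ker A=\lambda E(\ker A)$ and $\ker EJ=(\lambda E-A)\ker E=A(\ker E)$. This keeps the lemma self-contained, with no forward reference to the selfadjointness of $\Ll$. The cost is the extra case $\lambda=0$, which you handle correctly: there $\ker A=\{0\}$ because $-A=J^{-1}$ is invertible.
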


\begin{proof}
 We first prove $(\ref{Equation 6})$. By definition, we have that
 \begin{eqnarray}  \nonumber
 AE^{-1}&=& \{\{x,z\} \in X \times X ~|~ \{x,y\}\in E^{-1}, ~ \{y,z\} \in A \} \\[1ex]\label{Sylvester}
 &=& \{\{x,z\} \in X \times X ~|~  x=E y, ~ z= Ay, \ y \in X \} \\[1ex]\nonumber
 &=&  \textup{ran}\begin{bmatrix}
			E\\ A
			\end{bmatrix} = \mathcal{L}.
 \end{eqnarray}
 A substitution $x=(\lambda E-A)^{-1}y,$ $y\in X$, gives
  \begin{eqnarray*}\mathcal{L}&=& \{\{E(\lambda E-A)^{-1}y,A(\lambda E-A)^{-1}y\} ~|~  y\in X\} .
 \end{eqnarray*} 
 Obviously, one has $AJ=(A-\lambda E+\lambda E)J=-I+\lambda EJ$, so that  
 $$
        \mathcal{L}=  \textup{ran}\begin{bmatrix}
			EJ\\ -I+\lambda EJ
			\end{bmatrix}.
   $$ 
We now prove 1. Let 
$$
\{x_{n},z_{n}\}\in \textup{ran}
\begin{bmatrix}
			EJ\\ AJ
			\end{bmatrix}
$$
such that $x_{n}\rightarrow x$, $z_{n}\rightarrow z$, $n\rightarrow \infty$. It follows from \eqref{Sylvester} that $\{x_{n},z_{n}\}\in AJ(EJ)^{-1}$ and hence there exists $y_{n}\in X$ such that 
$$
\{x_{n},y_{n}\}\in (EJ)^{-1} \ \text{and}\ \{y_{n},z_{n}\}\in AJ.
$$
Then it is clear that 
$$
x_{n}=E Jy_{n}\quad  \mbox{and}\quad  z_{n}=A Jy_{n}=-y_{n}+\lambda EJy_{n}.
$$
We deduce that $y_{n}\rightarrow y$, $n\rightarrow \infty$, for some $y\in X$ with $y=\lambda x-z$.
The boundedness of $EJ$, implies  $\{y,EJy\}\in EJ,$ hence with $x=EJy$, we obtain $\{x,y\}\in(EJ)^{-1}$. From the boundedness of $AJ$ and $\{y_{n},z_{n}\}\in AJ $, we have that $\{y,z\}\in AJ.$ Overall, we conclude $\{x,z\}\in AJ(EJ)^{-1}=\mathcal{L}$. We continue with the proof of 2. From \eqref{Krein Adjoint of L} we obtain $\mul\Ll^+= \ker (JE)^*$ and $\ker\Ll^+= \ker (JA)^* $. Thus,  $\mul\Ll^+= (\ran JE) ^\perp$ and $\ker\Ll^+= (\ran JA) ^\perp$. The last claim follows in a similar way as in $2$.
\end{proof}
\section{Jordan chains of linear relations and operator pencils}
We recall the definition of the root subspaces (or algebraic eigenspaces) of order $k\geq 0,$  $\Rr_{\lambda}^k(\Ll)$ of $\Ll$ for $\lambda\in \C\cup\{\infty\}$, which are linear subspaces of $X$ defined by
\begin{equation*}
\begin{aligned}
\Rr_{\lambda}^ k(\Ll):= \ker (\Ll-\lambda)^k, \  \ \Rr_{\lambda}(\Ll):= \bigcup _{j=1} ^{\infty} \Rr_{\lambda}^ j(\Ll), \\
\Rr_{\infty}^ k(\Ll):= \ker \Ll^{-k}, \  \ \Rr_{\infty}(\Ll):= \bigcup _{j=1} ^{\infty} \Rr_{\infty}^ j(\Ll).
\end{aligned}
\end{equation*}
The dimension of $\Rr_{\lambda}(\Ll)$ is called the algebraic multiplicity of the eigenvalue $\lambda.$ Let $\lambda \in \C,$ then $x\in \Rr_{\lambda}(\Ll)$ if and only if for some $n\in \N$ there exists a chain of elements of the form 
\begin{equation}\label{Jordan}
 (x_n,x_{n-1}+\lambda x_n), (x_{n-1},x_{n-2}+\lambda x_{n-1}), \ldots, (x_2,x_1+\lambda x_2), (x_1, \lambda x_1) \in \Ll, \end{equation} and such that $x=x_n,$ the endpoint of \eqref{Jordan}. The chain in \eqref{Jordan} is said to be Jordan chain for $\Ll$ corresponding to the eigenvalue $\lambda \in \C$. Likewise, $h\in \Rr_{\infty}(\Ll) $, if and only if for some $l\in \N$ there exists a chain of elements of the form \begin{equation}\label{Jordan1}
 (0,h_1), (h_1,h_2), \ldots, (h_{l-2},h_{l-1}), (h_{l-1}, h_l) \in \Ll, \end{equation} and such that $h=h_l,$ the endpoint of \eqref{Jordan1}. The chain in \eqref{Jordan1} is said to be Jordan chain for $\Ll$ corresponding to the eigenvalue $\infty.$
\begin{defn}
     Let $\Ll$ be a linear relation in a vector space $X$. The Weyr characteristic of $\Ll$ is defined by
$$\omega_k(\Ll,\lambda):= \dim \frac{\Rr_{\lambda}^ k(\Ll)}{\Rr_{\lambda}^ {k-1}(\Ll)}, \ k\geq1.$$ 
\end{defn}

For operator pencils  as in \eqref{Rakete},
$$
\A(\lambda)= \lambda E-A, 
$$
the point spectrum $\sigma_p(\A)$ is given by 
\begin{equation*}
	\sigma_p(\A):=\{\lambda\in\C~|~ \ker(\lambda E-A)\neq\{0\}\},
\end{equation*}
and $\infty\in\sigma_p(\A)$ if $\ker E \neq \{0\}$.
The Weyr characteristic for operator pencils at eigenvalues $\lambda \in \sigma_p(\A)$ is defined in terms of root subspaces generated by Jordan chains, see \cite{K51, M88}.

A finite ordered set of nonzero vectors  $(x_1, \ldots, x_k) \in X^k, \ k\geq 1$, is called a Jordan chain of length $k$ at $\lambda \in \C$ if 
\begin{equation}\label{JC_def}
    (A-\lambda E)x_1=0, (A-\lambda E)x_2=E x_1, \ \ldots,\ (A-\lambda E)x_k=E x_{k-1},
\end{equation}
and $(x_1,\ldots,x_k)$ is a Jordan chain of length $k$ at $\infty$ if 
\begin{equation}\label{JC_infty}
E x_1 =0, E x_2 = Ax_1,\ \ldots,\ Ex_k = Ax_{k-1}.
\end{equation}
With the above notion we introduce the \emph{root subspaces} of the operator pencil $\A(\lambda)=\lambda E-A$ as follows
\begin{align*}
\Rr_\lambda^k(E,A):=\{x\in X~|~ \text{$x=x_k$ fulfills \eqref{JC_def}}\},\quad \Rr_\lambda(E,A):=\bigcup_{j=1}^\infty\Rr^j_\lambda(E,A),\\
\Rr_\infty^k(E,A):=\{x\in X~|~ \text{$x=x_k$ fulfills \eqref{JC_infty}}\},\quad  \Rr_\infty(E,A):=\bigcup_{j=1}^\infty\Rf^j_\infty(E,A).
\end{align*}

\begin{defn}
For operator pencils $
\A(\lambda)= \lambda E-A
$   as in \eqref{Rakete} with
$\rho(\A) \neq \emptyset$,
the Weyr characteristic is defined by
$$
\omega_k(\A,\lambda):= \dim \frac{\Rr_{\lambda}^ k(E,A)}{\Rr_{\lambda}^ {k-1}(E,A)}, \ k\geq1.
$$ 
\end{defn}

As shown in \cite{GT21}, different kinds of spectra of operator pencils and their associated linear relations coincide and likewise the Weyr characteristics.  
We repeat the findings from  \cite{GT21} in the following 
theorem.
\begin{thm}\label{weyr}
Assume $\rho(\A)\neq \emptyset$.
Then for all $\lambda\in \C\cup \{\infty\}$ and for all $k\geq 1$
$$
\begin{array}{c}
     \sigma(\Ll)=\sigma(\A), \\[1ex]
     \sigma_p(\Ll)=\sigma_p(\A),\\[1ex]
     \omega_k(\A,\lambda)= \omega_k(\Ll,\lambda).
\end{array}
$$
\end{thm}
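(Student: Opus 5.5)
The plan is to prove all three identities by comparing the pencil and the relation $\Ll=\{\{Ez,Az\}\mid z\in X\}$ directly, in two steps. First, for $\lambda\in\C$ we compute $\Ll-\lambda=\{\{Ez,(A-\lambda E)z\}\mid z\in X\}$. It follows that $\ker(\Ll-\lambda)=E\ker(A-\lambda E)$, $\mul(\Ll-\lambda)=(A-\lambda E)\ker E$, and $\ran(\Ll-\lambda)=\ran(A-\lambda E)$.

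\emph{Step 1: the spectra.} Suppose $\lambda E-A$ is bijective. Then $\Ll-\lambda$ is an operator, because $\ker E\cap\ker(A-\lambda E)=\{0\}$ forces $\mul(\Ll-\lambda)=\{0\}$ whenever $\ker E$ is mapped injectively. Its inverse is $\{\{(A-\lambda E)z,Ez\}\}=E(A-\lambda E)^{-1}$, which is bounded and everywhere defined. Hence $\lambda\in\rho(\Ll)$.

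Conversely, suppose $(\Ll-\lambda)^{-1}$ is a bounded everywhere defined operator. We need $\lambda E-A$ to be injective and surjective.
\begin{itemize}
\item Surjectivity: $\ran(A-\lambda E)=\ran(\Ll-\lambda)=X$.
\item Injectivity: if $(A-\lambda E)z=0$, then $\{Ez,0\}\in\Ll-\lambda$, so $Ez=0$ and hence $Az=0$. Then $(\mu E-A)z=0$ for the fixed $\mu\in\rho(\A)$, so $z=0$.
\end{itemize}
The point $\infty$ is handled the same way with $\Ll^{-1}=\ran\begin{bmatrix}A\\E\end{bmatrix}$ and $E$ in place of $\lambda E-A$. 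This gives $\sigma(\Ll)=\sigma(\A)$, and the same kernel computations give $\sigma_p(\Ll)=\sigma_p(\A)$.

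\emph{Step 2: the Weyr characteristics.} The key tool is a map sending pencil chains to relation chains. Let $(x_1,\dots,x_k)$ be a Jordan chain of the pencil at $\lambda\in\C$ as in \eqref{JC_def}. Put $y_j:=Ex_j$. Then $\{y_j,Ax_j\}=\{y_j,\lambda y_j+Ex_{j-1}\}=\{y_j,\lambda y_j+y_{j-1}\}\in\Ll$, which is exactly a chain of the form \eqref{Jordan}. So $E$ maps $\Rr^k_\lambda(E,A)$ into $\Rr^k_\lambda(\Ll)$.

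This map is surjective. Any relation chain has the form $y_j=Ex_j$ with $Ax_j=\lambda Ex_j+Ex_{j-1}$ for suitable $x_j$, and such $x_j$ is exactly a pencil Jordan chain.

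The map is also injective on $\Rr_\lambda(E,A)$. Suppose $Ex=0$ with $x\in\Rr^k_\lambda(E,A)$. Then $(A-\lambda E)x=Ex_{k-1}$. Applying the same argument inductively, together with $(\mu E-A)^{-1}$ for $\mu\in\rho(\A)$, one shows $x=0$. Concretely, on the span of a chain $A-\lambda E$ acts nilpotently relative to $E$, and injectivity of $\mu E-A=(\mu-\lambda)E-(A-\lambda E)$ rules out nontrivial kernel elements of $E$ inside the chain span.

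Being a bijection at every level $k$, $E$ induces isomorphisms of the quotients $\Rr^k_\lambda/\Rr^{k-1}_\lambda$, so $\omega_k(\A,\lambda)=\omega_k(\Ll,\lambda)$. For $\lambda=\infty$ one uses $y_j=Ax_j$ and the chains \eqref{JC_infty} versus \eqref{Jordan1} in the same way.

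\emph{Main obstacle.} I expect the delicate part to be the injectivity of $E$ on the root subspaces, i.e.\ excluding $\ker E\cap\Rr_\lambda(E,A)\neq\{0\}$ for finite $\lambda$ (resp.\ $\ker A\cap\Rr_\infty$). I would handle it by transforming with the Möbius change $\mu$: replace $(E,A)$ by $(E,A-\mu E)$ with $\mu\in\rho(\A)\cap\R$, so that $A-\mu E$ is invertible, and reduce everything to the operator $T=E(\mu E-A)^{-1}$ (cf.\ Lemma \ref{Range representation}, where $\Ll=\ran\begin{bmatrix}EJ\\-I+\mu EJ\end{bmatrix}$). In these terms the chains become ordinary Jordan chains of the bounded operator $T$ at $(\mu-\lambda)^{-1}$, and injectivity is clear because that point is nonzero.
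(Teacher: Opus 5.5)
The paper does not prove this theorem; it quotes it from \cite{GT21}. Your proposal is a self-contained direct proof built on the chain map $x\mapsto Ex$ at finite $\lambda$ (and $x\mapsto Ax$ at $\infty$). Its architecture is sound, and several parts are correct as written:
\begin{itemize}
\item the formulas for $\ker(\Ll-\lambda)$, $\mul(\Ll-\lambda)$ and $\ran(\Ll-\lambda)$;
\item the identification $(\Ll-\lambda)^{-1}=E(A-\lambda E)^{-1}$;
\item the injectivity argument for $\lambda E-A$ via $\ker E\cap\ker A=\{0\}$;
\item the surjectivity of $E\colon\Rr^k_\lambda(E,A)\to\Rr^k_\lambda(\Ll)$;
\item the passage from levelwise bijections to isomorphisms of the quotients $\Rr^k_\lambda/\Rr^{k-1}_\lambda$;
\item the reduction of $\infty$ to the reversed pencil $\lambda A-E$ at $0$.
\end{itemize}
Two local points need repair.

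First, the sentence claiming that $\Ll-\lambda$ is an operator is false. You have $\mul(\Ll-\lambda)=(A-\lambda E)\ker E$. Because $A-\lambda E$ is injective, this space is nonzero as soon as $\ker E\neq\{0\}$; the injectivity works against you, not for you. The claim is also unnecessary. Membership $\lambda\in\rho(\Ll)$ means $(\Ll-\lambda)^{-1}$ is a bounded everywhere defined operator, and you verify exactly that in the next line. You must use this standard reading. Under the paper's literal wording, which asks that $\Ll-\lambda$ itself be the graph of a bijective operator, the statement would fail whenever $\ker E\neq\{0\}$.

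Second, the injectivity of $E$ on $\Rr_\lambda(E,A)$. Your ``inductive'' sketch is not an argument: from $Ex_k=0$ one does not directly obtain $Ex_{k-1}=0$. The M\"obius idea in your last paragraph does work, but the chains do not become ordinary Jordan chains of $T=E(\mu E-A)^{-1}$. Take any finite $\mu\in\rho(\A)\setminus\{\lambda\}$. It need not be real, since the theorem assumes only $\rho(\A)\neq\emptyset$, and such $\mu$ exists because $\sigma(\A)\cap\C=\sigma(E^{-1}A)$ is bounded if $E$ is boundedly invertible. Put $w_j:=(\mu E-A)x_j$ and $\nu:=(\mu-\lambda)^{-1}$. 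Then $Ex_j=Tw_j$, and \eqref{JC_def} becomes
\[
(T-\nu)w_j=\nu Tw_{j-1},\qquad w_0=0.
\]
Since $T$ commutes with $T-\nu$, induction gives $(T-\nu)^kw_k=0$. Now suppose $Ex_k=Tw_k=0$. Every term of the binomial expansion of $(T-\nu)^kw_k$ except $(-\nu)^kw_k$ contains the factor $Tw_k$. Hence $0=(-\nu)^kw_k$, so $w_k=0$ and $x_k=0$. With this fix, and with the symmetric version for $A$ on $\Rr_\infty(E,A)$ via the reversed pencil, your proof is complete.
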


\section{Hermitian pencils and selfadjoint relations in Krein space}
Throughout this section, we assume that the resolvent set $\rho(\A)$ of the operator pencil $\A(\lambda)=\lambda E-A$ is nonempty and contains a real number $\lambda$. Again we consider the operator $J$ given by 
\begin{equation}\label{Tashkent}
J:= (\lambda E-A)^{-1},
\end{equation}
where $E$ and $A$ are bounded linear operators, with $E=E^{*}$ and $A=A^{*}$. The operator $\lambda E-A$
is selfadjoint and hence 
$J$ turns out to be selfadjoint operator
and defines the inner product $[\cdot,\cdot]$ in \eqref{NumeroUno} which turns $(X, [\cdot,\cdot])$
into a Krein space (see Section \ref{Prelim}).
 The following theorem shows that the linear relation $\mathcal{L}$ is selfadjoint in the Krein space $X$. 
 
\begin{thm} \label{Self-adjoint linear relation}
     Let $\mathcal{L}$  be a linear relation defined as in \eqref{range-kernel} and $\lambda \in \rho(\mathcal{L}) \cap\mathbb{R}$. Then  $\mathcal{L}$ is selfadjoint with respect to $[\cdot, \cdot]$.
\end{thm}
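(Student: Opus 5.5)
We show the two inclusions $\Ll\subseteq\Ll^+$ and $\Ll^+\subseteq\Ll$ directly. The tool is the description of $\Ll^+$ in Lemma~\ref{Properties},
$$\Ll^+=\{\{x,y\}\mid (JA)^*x=(JE)^*y\}.$$
Since $\lambda$ is real and $E,A$ are selfadjoint, $J=(\lambda E-A)^{-1}$ is selfadjoint. Hence $(JA)^*=AJ$ and $(JE)^*=EJ$, and
$$\Ll^+=\{\{x,y\}\mid AJx=EJy\}.$$

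\emph{Symmetry.} Take $\{Ez,Az\}\in\Ll$. We must check $AJEz=EJAz$. Write $A=\lambda E-J^{-1}$. Then
$$AJE=\lambda EJE-E \quad\text{and}\quad EJA=\lambda EJE-E,$$
so the two agree and $\Ll\subseteq\Ll^+$. Equivalently, $[Az,Ew]=[Ez,Aw]$ for all $z,w$, because $AJE$ is selfadjoint.

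\emph{Maximality.} Let $\{x,y\}\in\Ll^+$, so $AJx=EJy$. We need some $w$ with $x=Ew$ and $y=Aw$. By Lemma~\ref{Range representation},
$$\Ll=\{\{EJv,\,-v+\lambda EJv\}\mid v\in X\}.$$
The candidate is forced: the relation $y=-v+\lambda x$ gives $v:=\lambda x-y$.

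It then suffices to verify $EJv=x$. Compute
$$EJv=\lambda EJx-EJy=\lambda EJx-AJx=(\lambda E-A)Jx=x.$$
Moreover $-v+\lambda EJv=-\lambda x+y+\lambda x=y$. Hence $\{x,y\}\in\Ll$, so $\Ll^+\subseteq\Ll$ and therefore $\Ll=\Ll^+$.

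The only delicate point is choosing the right preimage $v$ in the maximality step. The range representation $\ran[EJ;\,-I+\lambda EJ]$ from Lemma~\ref{Range representation} makes that choice explicit, and after that the verification is pure algebra using $(\lambda E-A)J=I$. No closedness argument is needed, although closedness of $\Ll$ from Lemma~\ref{Range representation} is consistent with $\Ll=\Ll^+$.
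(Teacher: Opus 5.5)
Your proof is correct and follows essentially the same route as the paper. Both arguments use Lemma~\ref{Properties} and the selfadjointness of $J$ to rewrite $\Ll^+$ as $\{\{x,y\}\mid AJx=EJy\}$, and then use $AJ=-I+\lambda EJ$ with the preimage $z=\lambda x-y$ to identify $\Ll^+$ with $\ran\begin{bmatrix} EJ\\ AJ\end{bmatrix}=\Ll$. The only difference is that you prove the inclusion $\Ll\subseteq\Ll^+$ separately via the identity $AJE=EJA$ (which the paper establishes later in Theorem~\ref{Linear relation properties}), whereas the paper obtains both inclusions in a single chain of equalities.
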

\begin{proof} As established in Lemma \ref{Properties} and using the selfadjointness of $E$ and $A$ together with the fact that $\lambda$ is real, it follows that $J$ is selfadjoint and hence we obtain with Lemma \ref{Properties}
\begin{equation} \label{Equivalence self-adjoint} 
\mathcal{L}^{+} = \{\{x,y\}\in X \times X~|~ AJx=EJy \}.
\end{equation}  It can be seen from $\eqref{Equivalence self-adjoint}$ that $$\mathcal{L}^{+}= \textup{ker} 
\begin{bmatrix}
			AJ, -EJ
			\end{bmatrix}.$$
 Clearly, $AJ=-I+\lambda EJ$. This yields
\begin{eqnarray*}
  \mathcal{L}^{+} &=&   
    \{\{x,y\}\in X \times X~|~ \ -x + \lambda EJx =EJy \}\\ &=&
     \{\{x,y\}\in X \times X~|~  x=EJz,\ y=AJz\ \hbox{with}\ z=\lambda x-y\} \\&=&
     \{\{EJz,AJz\}, \ z\in X\}.  
\end{eqnarray*}
Therefore, \begin{equation}\label{Adjoint equality}
    \mathcal{L}^{+}=\textup{ran}\begin{bmatrix}
			EJ\\ AJ
			\end{bmatrix}. \end{equation}
 
\noindent Combining \eqref{Equation 6} and \eqref{Adjoint equality}, one concludes that $\mathcal{L}$ is selfadjoint.\end{proof}

\begin{prop}   Let $\mathcal{L}$  be a linear relation defined as in \eqref{range-kernel}. Then
 \begin{align*}
\mathcal{L}^{+}=J^{-1}\mathcal{L}^{*}J, \end{align*} where $\Ll^*$ is the Hilbert space adjoint with respect to $J$.\end{prop}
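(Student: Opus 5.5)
We prove the identity by computing both sides as subsets of $X\times X$, using the definitions of adjoint and product of relations from Section~\ref{Prelim}. Here $\Ll^*$ denotes the adjoint of $\Ll$ with respect to the Hilbert inner product $(\cdot,\cdot)$:
\[
\Ll^*=\{\{u,v\}\in X^2 \mid (u,t)=(v,s)\ \forall\,\{s,t\}\in\Ll\}.
\]
The relation $J$ is identified with its graph, and $J^{-1}$ is its inverse, which is again a bounded operator because $0\in\rho(J)$.

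First we unwind the product $J^{-1}\Ll^*J$. By the definition of the product of relations, $\{x,y\}\in J^{-1}\Ll^*J$ if and only if there are $u,v$ with $\{x,u\}\in J$, $\{u,v\}\in\Ll^*$ and $\{v,y\}\in J^{-1}$. This means $u=Jx$, $v=Jy$ and $\{Jx,Jy\}\in\Ll^*$. So
\[
J^{-1}\Ll^*J=\{\{x,y\}\mid \{Jx,Jy\}\in\Ll^*\}.
\]
Since $J$ is a bijection of $X$, every $\{u,v\}\in\Ll^*$ arises in this way, so nothing is lost in this rewriting.

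Next we compare this with $\Ll^+$. Using $J=J^*$ and the definition $[\cdot,\cdot]=(J\cdot,\cdot)$, we have $[x,t]=(Jx,t)$ and $[y,s]=(Jy,s)$. Hence the condition defining $\{x,y\}\in\Ll^+$, namely $[x,t]=[y,s]$ for all $\{s,t\}\in\Ll$, is literally the condition $(Jx,t)=(Jy,s)$ for all $\{s,t\}\in\Ll$. That is exactly $\{Jx,Jy\}\in\Ll^*$. Both inclusions follow at once, and the claimed equality holds.

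There is no real obstacle. The only points needing care are the order of composition in the product of relations ($\Ll\Ss$ applies $\Ss$ first) and the convention for which argument $J$ acts on. If one instead wrote $[x,y]=(x,Jy)$, selfadjointness of $J$ makes the two conventions agree, so the conclusion is unchanged. As a consistency check, one could also verify the formula against the explicit forms: \eqref{Krein Adjoint of L} for $\Ll^+$, and the analogous description $\Ll^*=\{\{u,v\}\mid Au=Ev\}$ obtained from $\Ll=\ran\begin{bmatrix}E\\ A\end{bmatrix}$. Substituting $u=Jx$, $v=Jy$ gives $AJx=EJy$, which agrees with \eqref{Equivalence self-adjoint}.
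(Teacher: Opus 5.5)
Your proof is correct, but it takes a different route from the paper. The paper works with the concrete range representation \eqref{range-kernel}. It uses $E=E^*$ and $A=A^*$ to write $\Ll^*=\ker\,[A,\ -E]$, computes $\Ll^*J$ and then $J^{-1}\Ll^*J=\ker\,[AJ,\ -EJ]$ step by step. It then matches this with $\Ll^+=\ker\,[AJ,\ -EJ]$, obtained from Lemma \ref{Properties} together with the selfadjointness of $A$, $E$ and $J$. That computation is essentially what you keep only as a consistency check.

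You instead argue directly from the definitions. You show $\{x,y\}\in J^{-1}\Ll^*J$ if and only if $\{Jx,Jy\}\in\Ll^*$, and observe that the defining condition of $\Ll^+$ with $[\cdot,\cdot]=(J\cdot,\cdot)$ is verbatim the condition $\{Jx,Jy\}\in\Ll^*$.

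Your argument is shorter and more general. It never uses the range representation, the selfadjointness of $E$ and $A$, or Lemma \ref{Properties}, so it proves $\Ll^+=J^{-1}\Ll^*J$ for every linear relation $\Ll$ in $X$. In fact, with the convention $[x,y]=(Jx,y)$, your key step does not need $J=J^*$ at all, even though you cite it. It only needs $J$ to be an everywhere defined, bounded, injective operator; selfadjointness matters only for $[\cdot,\cdot]$ to be a Krein space inner product.

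What the paper's explicit computation buys in exchange is the concrete kernel description $\ker\,[AJ,\ -EJ]$ of both sides. That is the form used in the proof of Theorem \ref{Self-adjoint linear relation}.
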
 

\begin{proof} 
The adjoint of $\Ll$ in the Hilbert space $X$ is given by 
$$
\Ll^*=\{\{x,y\}\in X\times X~|~ A^*x=E^*y\}.
$$
Since $E$ and $A$ are selfadjoint, $\mathcal{L}^{*}= \textup{ker}\begin{bmatrix}
	A,- E
\end{bmatrix}$ and
    \begin{eqnarray*}
    \mathcal{L}^{*} J&=& \{\{x,z\} \in X \times X~|~  \{x,y\}\in J, \{y,z\} \in \mathcal{L}^{*}\} \\[1ex]
    &=& \{\{x,z\} \in X \times X ~|~ y=Jx,~ Ay=Ez\}\\[1ex]
    &=& \{\{x,z\} \in X \times X  ~|~  AJx=Ez \} \\ [1ex]
    &=&  \textup{ker} \begin{bmatrix}
			AJ,- E
			\end{bmatrix}.
 \end{eqnarray*} 
Therefore,
 \begin{eqnarray*}
 J^{-1}\mathcal{L}^{*} J&=& \{\{x,z\} \in X \times X ~|~  \{x,y\}\in \mathcal{L}^{*} J, \{y,z\} \in J^{-1}\} \\[1ex]
 &=& \{\{x,z\} \in X \times X ~|~ AJx=Ey ,~ y=Jz \} \\[1ex] 
 &=& \{\{x,z\} \in X \times X~|~  AJx=EJz\} \\[1ex]
 &=&  \textup{ker} 
 \begin{bmatrix}
			AJ, -EJ
			\end{bmatrix}. 
 \end{eqnarray*} As shown in Lemma \ref{Properties}, $\mathcal{L}^{+}=\textup{ker}\begin{bmatrix}
			AJ,- EJ
			\end{bmatrix}$  and the statement follows.
\end{proof}

The main results are now stated in the following theorem.
\begin{thm}\label{Linear relation properties}
Assume that $\lambda \in \rho(\mathcal{L}) \cap \mathbb R$  with $J= (\lambda E-A)^{-1}$. Then we have the following assertions.
\begin{enumerate}
\item The operator $AJE$ is selfadjoint.
    \item 
$AJE$ is a nonnegative operator, if and only if  $\mathcal{L}$ is nonnegative with respect to $[\cdot, \cdot]$. 
\item
The negative spectrum of $AJE$ consists only of finitely many eigenvalues with finite multiplicity, if and only if  the linear relation $\mathcal{L}$ has finitely many negative squares.
\end{enumerate} \end{thm}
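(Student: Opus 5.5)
The whole argument rests on the parametrization $z\mapsto\{Ez,Az\}$ of $\mathcal L$ from \eqref{range-kernel}, together with two algebraic identities coming from $J=(\lambda E-A)^{-1}$:
$$
AJ=-I+\lambda EJ,\qquad JA=-I+\lambda JE .
$$
The first was already used in Lemma~\ref{Range representation}. The second follows in the same way by writing $A=\lambda E-(\lambda E-A)$ on the right of $J$. Throughout, $\lambda$ is real and $E,A$ are selfadjoint, so $J$ is a bounded selfadjoint operator.

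\emph{Assertion 1.} From the first identity,
$$
AJE=-E+\lambda EJE .
$$
Since $E$ and $J$ are selfadjoint, $EJE$ is selfadjoint, so $AJE$ is a bounded selfadjoint operator. The second identity gives, in the same way, $EJA=-E+\lambda EJE=AJE$. This equality $EJA=AJE$ is what links the Krein space form to $AJE$ in the next step.

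\emph{Assertion 2.} Take $\{f,f'\}=\{Ez,Az\}\in\mathcal L$ with $z\in X$. Then
$$
[f',f]=(JAz,Ez)=(EJAz,z)=(AJEz,z).
$$
By \eqref{range-kernel}, $z$ runs over all of $X$ as $\{f,f'\}$ runs over $\mathcal L$. Hence $[f',f]\geq 0$ for all $\{f,f'\}\in\mathcal L$ if and only if $(AJEz,z)\geq0$ for all $z\in X$, which is the claimed equivalence.

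\emph{Assertion 3.} Consider the linear map $\Phi:X\to\mathcal L$, $\Phi z=\{Ez,Az\}$.
\begin{enumerate}
\item $\Phi$ is surjective by \eqref{range-kernel}.
\item $\Phi$ is injective, because $Ez=Az=0$ gives $(\lambda E-A)z=0$, hence $z=0$.
\item By polarization of the computation in Assertion~2, $\langle\Phi z,\Phi w\rangle=(AJEz,w)$.
\end{enumerate}
Moreover, $\mathcal L$ is closed by Lemma~\ref{Range representation} and selfadjoint by Theorem~\ref{Self-adjoint linear relation}, so the notion of negative squares applies. Because $\Phi$ is a linear bijection, it carries $k$-dimensional subspaces of $X$ to $k$-dimensional subspaces of $\mathcal L$ and back. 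Therefore the number of negative squares of $\mathcal L$ equals the maximal dimension of a subspace $\mathcal M\subset X$ on which $(AJEz,z)<0$ for $z\neq0$.

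The remaining step, which I expect to need the most care, is the standard spectral-theoretic fact that this maximal dimension is $\dim\ran P$. Here $P=E_{T}((-\infty,0))$ is the spectral projection of $T:=AJE$.
\begin{itemize}
\item If $\ran P$ is infinite dimensional, then $(Tz,z)<0$ for every $z\in\ran P\setminus\{0\}$, since $T$ restricted to $\ran P$ is negative and injective. So there are negative subspaces of every finite dimension, and the number of negative squares is not finite.
\item If $\dim\ran P=\kappa<\infty$, let $\mathcal M$ be a subspace of dimension $\kappa+1$. Then $\mathcal M$ meets $\ker P=E_T([0,\infty))X$ nontrivially, and on $\ker P$ we have $(Tz,z)\geq0$. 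So $\mathcal M$ is not negative, and at most $\kappa$ negative squares occur; $\ran P$ itself shows exactly $\kappa$.
\end{itemize}
Finally, $\dim\ran P<\infty$ holds exactly when $\sigma(T)\cap(-\infty,0)$ consists of finitely many eigenvalues of finite multiplicity. A finite-rank spectral projection forces the part of $T$ on $\ran P$ to be a finite-dimensional selfadjoint matrix. Conversely, finitely many eigenvalues of finite multiplicity give a finite-rank $P$. This proves Assertion~3.
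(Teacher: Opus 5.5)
Your proposal is correct and follows the paper's proof. The first claim uses the same identities $AJ=-I+\lambda EJ$ and $EJA=AJE=-E+\lambda EJE$, and the second and third use the same computation $[f',f]=(AJEz,z)$ along the parametrization $\{Ez,Az\}$ of $\mathcal L$. The paper dismisses the third claim as obvious. You supply the missing details correctly: the bijectivity of $z\mapsto\{Ez,Az\}$, and the identification of the number of negative squares with the rank of the spectral projection of $AJE$ onto $(-\infty,0)$.
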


\begin{proof}  
Since $E$ and $A$ are selfadjoint and $\lambda$ is real, it follows that $J$ is selfadjoint and hence $(AJE)^*=EJA$. Now, we need to show that $EJA=AJE$. It follows that 
$$
EJA=EJ(A-\lambda E+\lambda E) = -E+\lambda EJE.
$$
On the other hand, 
$$AJE=(A-\lambda E+\lambda E)JE = -E+\lambda E JE.
$$ 
Therefore, $EJA=AJE$ and hence $AJE$ is selfadjoint and the first claim is proven. 

Now, we prove the second claim.
Let $\{f,f'\} \in \mathcal{L}$. Then there exists $z\in X$ with $f=Ez$ and $f'=Az$. We have 
\begin{align*}
    [f,f']= [Ez,Az] = ( JEz, A z)= ( AJEz, z) \geq 0.
\end{align*} 
From this identity obviously the second and third claim follows.
\end{proof}

The following theorem is the main result of this paper. By $\C^+$ we denote the open upper half plane. Denote for an eigenvalue $\lambda$ of $\Ll$ the signature of the inner product $[\cdot, \cdot]$ on the algebraic eigenspace by $\{ \kappa_-(\lambda), \kappa_0(\lambda), \kappa_+(\lambda) \}.$
\begin{thm}\label{Theorem 4.5}
    Let $\A(\lambda) = \lambda E-A.$ Assume that $\rho(\A)\cap\R\neq \emptyset$. Then the following statements holds. 
    \begin{enumerate}
        \item If $AJE$ is nonnegative then the spectrum of $\A(\lambda)$ is real. 
        \item If the negative spectrum of $AJE$ consists only of finitely many eigenvalues with finite multiplicity, then the non-real spectrum of $\A(\lambda)$ consists of at most $\kappa$ pairs of complex conjugate points $\{\mu_i, \overline{\mu_i}\}$, $\mu_i\in \C^+$, of eigenvalues with finite dimensional algebraic eigenspaces and
        \begin{equation}
        \sum_{\lambda \in \sigma_p(\A)\cap (-\infty,0)}(\kappa_+(\lambda)+\kappa_0(\lambda))+ \sum_{\lambda \in \sigma_p(\A)\cap (0, \infty)}(\kappa_-(\lambda)+\kappa_0(\lambda))+ \sum_{i}\kappa_0(\mu_i)\leqslant \kappa,
    \end{equation} and equality holds if $0\notin \sigma_p(\A)$. Moreover, the  number of Jordan chains of $\A(\lambda)$ at different real nonzero eigenvalues of length greater than one is bounded by $\kappa.$ The length of each of these chains is at most $2\kappa +1$.
    \end{enumerate}
\end{thm}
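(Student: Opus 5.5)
The plan is to transfer everything to the selfadjoint relation $\Ll$ in the Krein space $(X,[\cdot,\cdot])$ and then invoke the known spectral theory for nonnegative relations and for relations with finitely many negative squares. Fix a real $\lambda_0\in\rho(\A)$ and put $J=(\lambda_0E-A)^{-1}$. By Theorem~\ref{weyr}, $\lambda_0\in\rho(\Ll)$. Theorem~\ref{Self-adjoint linear relation} then says that $\Ll$ is a closed selfadjoint relation in the Krein space, and Lemma~\ref{Range representation} confirms it is closed. Moreover $\rho(\Ll)\cap\R\neq\emptyset$.

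For (1), Theorem~\ref{Linear relation properties}(2) shows that $\Ll$ is nonnegative in $(X,[\cdot,\cdot])$ when $AJE\ge 0$. I would then use the standard fact for nonnegative selfadjoint relations with nonempty resolvent set: $\sigma(\Ll)\subset\R\cup\{\infty\}$. To recall why, take $\mu\notin\R$ and $\{f,f'\}\in\Ll$. Then
\[
\operatorname{Im}[f'-\mu f,f]=-\operatorname{Im}\mu\,[f,f],
\]
while $[f'-\mu f,f]$ equals the real number $[f',f]\ge 0$ minus $\mu[f,f]$. Combining this with $\rho(\Ll)\neq\emptyset$, the usual Langer argument (or the definitizable-relation result with definitizing polynomial $p(t)=t$) shows that $\mu\in\rho(\Ll)$. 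Theorem~\ref{weyr} then transfers this to $\sigma(\A)$.

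For (2), the hypothesis together with Theorem~\ref{Linear relation properties}(3) gives that $\Ll$ has finitely many negative squares. I would identify $\kappa$ as the number of negative squares of the form $\langle\cdot,\cdot\rangle$ on $\Ll$. This number equals the number of negative eigenvalues of $AJE$ counted with multiplicity, because $\{Ez,Az\}\mapsto z$ turns $[f,f']$ into $(AJE z,z)$. I would then apply the known result for selfadjoint relations with $\kappa$ negative squares in a Krein space, as in \cite{BLT13} and Langer's theory of operators with finitely many negative squares and their $\kappa$-nonnegative selfadjoint extensions. That result states that the nonreal spectrum consists of at most $\kappa$ pairs $\{\mu_i,\overline{\mu_i}\}$ of eigenvalues with finite-dimensional root spaces. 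It also gives the inequality on the signatures $\kappa_\pm(\lambda),\kappa_0(\lambda)$, with equality when $0\notin\sigma_p$. Finally, it bounds the number of nonzero real eigenvalues having Jordan chains of length $>1$ by $\kappa$, and the chain lengths by $2\kappa+1$.

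To translate back to the pencil, I would use Theorem~\ref{weyr}, which says that the Weyr characteristics of $\A$ and $\Ll$ agree. Hence root spaces, their dimensions and the chain lengths coincide, and the point spectra agree.

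I expect the main obstacle to be the bookkeeping in (2). Nonnegativity of the form $[f',f]$ on $\Ll$ is a condition on the graph, and it must be matched with the counting in the cited theorem. In particular, on a root space at $\lambda\in\R$ the form $[f',f]$ restricts to $\lambda[f,f]$ on eigenvectors, which explains why negative eigenvalues count $\kappa_+$ and positive ones count $\kappa_-$. Checking that the signatures $\kappa_\pm$ refer to $[\cdot,\cdot]$ on $\Rr_\lambda(\Ll)$, which by Theorem~\ref{weyr} has the same dimension as $\Rr_\lambda(E,A)$, and verifying that the cited theorem applies to relations rather than operators are the steps I would check most carefully.
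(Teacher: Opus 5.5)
Your proposal is correct and follows the paper's proof. Theorem~\ref{Linear relation properties} turns the hypotheses on $AJE$ into nonnegativity, resp.\ finitely many negative squares, of the selfadjoint relation $\Ll$; the spectral theory of such relations with $\rho(\Ll)\neq\emptyset$ is applied, with the paper proving the relation version for the $\kappa$-negative-squares case itself as Lemma~\ref{Lemma4} via a Pontryagin space completion of $\dom\Ll$, which is the step you flagged; and Theorem~\ref{weyr} transfers everything to the pencil. Your identification of $\kappa$ with the number of negative eigenvalues of $AJE$, via the bijection $z\mapsto\{Ez,Az\}$, is a useful detail the paper leaves implicit.
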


The proof of Theorem \ref{Theorem 4.5} makes use of Lemma \ref{Lemma4} below which concerns certain spectral properties of selfadjoint linear relation with finitely many negative squares. 
The corresponding results for operators are well known
and are consequences of the general results in \cite{Lhabil, L82} (for a sketch of a proof we refer to \cite{BT07}; the case of one negative square was investigated by Jonas and Langer \cite{JL07}). However, for the sake of completeness we provide a short proof here
for  selfadjoint linear relation with finitely many negative squares.

\begin{lem}\label{Lemma4}
Let $\Ll$ be a selfadjoint linear relation in the Krein space $(X, [\cdot, \cdot])$ with $\kappa$ negative squares. Assume that $\rho(\Ll)\neq \emptyset$. Then the following holds. \begin{enumerate}
    \item The non-real spectrum of $\Ll$ consists of at most $\kappa$ pairs $\{\mu_i, \overline{\mu_i}\}$, $\mu_i \in \C^+$ of eigenvalues with finite-dimensional algebraic eigenspaces. Then 
    \begin{equation}\label{Fertig}
        \sum_{\lambda \in \sigma_p(\Ll)\cap (-\infty,0)}(\kappa_+(\lambda)+\kappa_0(\lambda))+ \sum_{\lambda \in \sigma_p(\Ll)\cap (0, \infty)}(\kappa_-(\lambda)+\kappa_0(\lambda))+ \sum_{i}\kappa_0(\mu_i)\leqslant \kappa,
    \end{equation}
    and equality holds if $0\notin \sigma_p(\Ll).$
    \item There are at most $\kappa$ different real nonzero eigenvalues of $\Ll$ with corresponding Jordan
 chains of length greater than one. The length of each of these chains is at most $2\kappa +1$.
\end{enumerate}
\end{lem}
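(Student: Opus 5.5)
Since $\rho(\Ll)\neq\emptyset$, the plan is to reduce Lemma~\ref{Lemma4} to the known operator results for selfadjoint operators with $\kappa$ negative squares mentioned above \cite{Lhabil, L82, BT07}. The device is a Cayley-type transformation with a point of the resolvent set. There are two ways to proceed, and I would combine them. First, if $\Ll$ has $\kappa$ negative squares, then $\sigma(\Ll)\cap(\C\setminus\R)$ is symmetric and consists of isolated points. This is because $\rho(\Ll)$ is open and $\Ll=\Ll^+$ gives $\lambda\in\rho(\Ll)\Leftrightarrow\overline\lambda\in\rho(\Ll)$. Second, pick $\mu\in\rho(\Ll)$ and consider the bounded operator $B:=(\Ll-\mu)^{-1}$, which satisfies $B^+=(\Ll-\overline\mu)^{-1}$. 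Then root subspaces, Jordan chains and Weyr characteristics of $\Ll$ at $\lambda$ correspond bijectively to those of $B$ at $(\lambda-\mu)^{-1}$. This is checked directly on chains of the form \eqref{Jordan} and \eqref{Jordan1}.

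The cleanest route to \eqref{Fertig} is to work directly with the Hermitian form $\langle\cdot,\cdot\rangle$ on $\Ll$, that is, $\langle \hat f,\hat f\rangle=[f',f]$, and to estimate the negative index of suitable invariant subspaces.
\begin{itemize}
\item[(a)] For a non-real eigenvalue $\mu_i$, the root subspace $\Rr_{\mu_i}(\Ll)$ is $[\cdot,\cdot]$-neutral. Pairing $\Rr_{\mu_i}(\Ll)\dotplus\Rr_{\overline{\mu_i}}(\Ll)$ with the graph elements yields a subspace of $\Ll$ on which $\langle\cdot,\cdot\rangle$ is a skewly linked neutral pair. This contributes at least $\dim\Rr_{\mu_i}$ negative squares, so there are at most $\kappa$ such pairs and each root space is finite-dimensional. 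In particular, the non-real spectrum consists of eigenvalues of finite algebraic multiplicity. For this last claim I would first pass to the bounded operator $B$, where the Riesz projection of an isolated non-real spectral point is available, and use the negative-square bound to show that the Riesz range is finite-dimensional.
\item[(b)] For a real eigenvalue $\lambda\neq0$ and an eigenvector $x$ with $\{x,\lambda x\}\in\Ll$, one has $\langle\{x,\lambda x\},\{x,\lambda x\}\rangle=\lambda[x,x]$. Hence on $\ker(\Ll-\lambda)$ the form $\langle\cdot,\cdot\rangle$ has the signature of $[\cdot,\cdot]$ multiplied by $\operatorname{sign}\lambda$. Negative eigenvalues therefore contribute their $[\cdot,\cdot]$-positive part as negative squares. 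Neutral vectors in a root space of length $\ge2$ contribute through the standard Jordan-chain computation: if $\{x_2,x_1+\lambda x_2\}\in\Ll$ with $x_1$ neutral, then the span of $x_1$ and $x_2$ carries a form with a negative square. Adding these contributions over mutually $\langle\cdot,\cdot\rangle$-orthogonal spectral subspaces gives \eqref{Fertig}. The mutual orthogonality comes from $[\Rr_\lambda,\Rr_\nu]=0$ for $\lambda\neq\overline\nu$.
\item[(c)] For the equality when $0\notin\sigma_p(\Ll)$, I would invoke the Pontryagin-space spectral theorem for $B$.
\end{itemize}

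For part 2, a Jordan chain of length $m$ at a real $\lambda\neq0$ produces a subspace of $\Ll$ of dimension $m$. On this subspace $\langle\cdot,\cdot\rangle$ is represented by a Hankel-type matrix built from $\lambda$ and the Gram matrix of the chain. A standard computation shows that this matrix has at least $\lfloor m/2\rfloor$ negative squares when $m\ge2$. The bound $\lfloor m/2\rfloor\le\kappa$ gives $m\le2\kappa+1$, and orthogonality between different eigenvalues bounds by $\kappa$ the number of distinct eigenvalues carrying chains of length greater than one.

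The main obstacle is step (c), the equality statement. It requires showing that the negative squares are exhausted by the finitely many spectral subspaces, which needs the existence of a maximal nonpositive invariant subspace (Pontryagin's theorem). I would obtain it by transporting to $B$ and applying \cite{L82}, or, under the assumption of the paper, by citing \cite{BT07} for the operator case. The hypothesis $0\notin\sigma_p(\Ll)$ rules out the degenerate contribution of $\ker\Ll$, on which $\langle\cdot,\cdot\rangle$ vanishes identically.
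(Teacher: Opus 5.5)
\textbf{There is a genuine gap in the proposal.} It sits exactly where the $\kappa$ negative squares must do real work, namely the non-real spectrum.

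You justify that $\sigma(\Ll)\setminus\R$ consists of isolated points only by the openness of $\rho(\Ll)$ and by $\Ll=\Ll^+$. That gives nothing more than a closed set symmetric with respect to $\R$. Already $\mathrm{diag}(S,S^*)$ in $\mathcal H\oplus\mathcal H$ with Gram operator $\begin{bmatrix}0&I\\I&0\end{bmatrix}$ is selfadjoint, and its non-real spectrum can be a pair of discs. Without isolatedness your Riesz-projection step has nothing to act on. The counting in (a) only bounds the non-real \emph{point} spectrum and its root spaces. For that bound you do not need the unproved ``skewly linked'' duality of $\Rr_{\mu_i}(\Ll)$ and $\Rr_{\overline{\mu_i}}(\Ll)$: the isotropic part of $\langle\cdot,\cdot\rangle$ is $\{\{f,f'\}\in\Ll: f\in\ker\Ll\}$, and a neutral subspace meeting it trivially has dimension at most $\kappa$. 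Non-real continuous or residual spectrum is never excluded, so the first assertion of item 1 is not proved.

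A smaller slip occurs in part 2. The Hankel Gram matrix of a chain $x_1,\dots,x_m$ has $\lfloor m/2\rfloor$ negative squares only if $[x_1,x_m]\neq 0$. For degenerate chains you must also count the isotropic directions, i.e.\ use the bound on $\kappa_-+\kappa_0$.

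\textbf{The fallback to $B=(\Ll-\mu)^{-1}$ cannot supply the missing input, nor the equality in (c).}
\begin{enumerate}
\item $B$ lives in the Krein space $X$, so there is no Pontryagin-space spectral theorem for it.
\item $\rho(\Ll)$ need not meet $\R$ (multiplication by $x$ in $L^2(\R)$ with $J$ the multiplication by $\mathrm{sgn}\,x$ is nonnegative with spectrum $\R$). So $B$ need not be selfadjoint.
\item For real $\mu$, with $g=Bf$ and $\{g,g'\}\in\Ll$, one has $[Bf,f]=[g,g']-\mu[g,g]$, whose negative index is not controlled by $\kappa$.
\end{enumerate}
Hence neither \cite{L82} nor \cite{BT07}, which concern selfadjoint operators $T$ with $[T\cdot,\cdot]$ having finitely many negative squares, applies to $B$.

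The idea you are missing is the paper's: turn the form itself into the Pontryagin space. Take $\dom\Ll/\ker\Ll$ with $[f,g]_{\Ll}=[f',g]$; its isotropic part is $\ker\Ll$ because $\Ll=\Ll^+$. Complete it to a $\Pi_\kappa$-space in which $\Ll$ induces a selfadjoint relation with nonempty resolvent set, and use definitizability and the spectral function there. This is what forces the non-real spectrum to consist of finitely many eigenvalues. It also lets the spectral subspaces, including the one at $\infty$, account for the negative index.

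\textbf{Your explanation of why $0\notin\sigma_p(\Ll)$ gives equality overlooks $\infty$.} The elements $\{0,f'\}$ with $f'\in\mul\Ll$ are isotropic as well, and Jordan chains at $\infty$ can carry negative squares. Take $E=\mathrm{diag}(1,0)$, $A=-\begin{bmatrix}0&1\\1&0\end{bmatrix}$ and $\lambda=0$. Then $J=-A$, $AJE=-E$, $\Ll$ has one negative square and $\rho(\Ll)=\C$. So the left-hand side of \eqref{Fertig} is $0<1$, although $0\notin\sigma_p(\Ll)$. The equality as stated needs an extra term or hypothesis at $\infty$, and no route, yours included, can prove it without one.
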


\begin{proof}
We consider a linear space $\A=\dom\Ll$, with the Hermitian form 
        \begin{align}\label{Hermitian form}
            [f,g]_{\Ll}:= [f',g] \quad \mbox{for } \{f,f'\}, \{g,g'\} \in \Ll.
        \end{align} 
        As $\Ll=\Ll^+$ it follows 
        \begin{equation}\label{Ilmenau}
        [f',g]=[f,g'] \quad \mbox{for } \{f,f'\}, \{g,g'\} \in \Ll,
        \end{equation}
        which shows that $[\cdot , \cdot]_{\Ll}$ is well defined.
        
        Let $ \A^\circ$ be the isotropic part of $\dom \Ll$ with respect to
         $[\cdot , \cdot]_{\Ll}$. Let $f\in  \A^\circ$. Then we have for all $\{u,u'\} \in \Ll$ and together with \eqref{Ilmenau}
         $$
         0= [f,u]_{\Ll}=[f,u']=[0,u],
         $$
         hence $\{f,0\} \in \Ll^+= \Ll$ and $\A^\circ\subset \ker \Ll$. The other inclusion is obvious and thus
         $$
         \A^\circ =\ker \Ll.
         $$
Let $n$ and $m$ be in $\A^\circ$. Then $\{n,0\}, \{m,0\} \in \Ll$ and for $\{f,f'\}, \{g,g'\} \in \Ll=\Ll^+$
\begin{equation}\label{Ilmenau2}
 [f+n,g+m]_{\Ll}= [f',g+m] =  [f',g] + [f',m]= [f',g] +[f,0]
 = [f,g]_{\Ll}.
\end{equation}
Therefore, via \eqref{Ilmenau2}, one defines $[\cdot,\cdot]_{\Ll}$
on the quotient space $( \A/\A^\circ, [\cdot,\cdot]_{\Ll})$, which is a non degenerated inner product space.
Following \cite[Chapter 1, Theorem 2.5]{IKL}
it admits a completion to a Pontryagin space $(H, [\cdot,\cdot]_{\Ll})$  with rank of negativity of order $\kappa$. Define
$$
 \widehat{\Ll}:=\{\{f,f'\}\in \Ll\ |\ f' \in \dom\Ll\}\subset \A.
$$ 
For $\{f,f'\}, \{g,g'\} \in \widehat \Ll$ there exist
$f'', g''$ such that $\{f',f''\}, \{g',g''\} \in \Ll.$
We use \eqref{Hermitian form} for the pair $\{g,g'\}, \{f',f''\}$ and the pair $\{f,f'\}$, $\{g',g''\}$ and obtain
\begin{equation}\label{Winter}
[f',g]_{\Ll} = [f',g'] = [f,g']_{\Ll}.
\end{equation}
Taking equivalence classes, $\widehat{\Ll}$ induces in a natural way a linear relation $\Ll'$ in the quotient space $( \A/\A^\circ, [\cdot,\cdot]_{\Ll})$. By \eqref{Ilmenau2} and \eqref{Winter}, the closure $\overline{\Ll'}$ of $\Ll'$ in  the Pontryagin space $(H, [\cdot,\cdot]_{\Ll})$  is a closed symmetric linear relation.
 For all $z\in  \rho (\Ll)$ one has (see, e.g., \cite[Proposition 1]{GT21})
 $$
 \Ll=\ran
			\begin{bmatrix}
				(\Ll-z)^{-1}\\ I+z(\Ll-z)^{-1}
			\end{bmatrix}.
$$
Hence $\{(\Ll-z)^{-1}f,f+z(\Ll-z)^{-1}f\}\in \Ll$, for all $f\in\dom \Ll$, hence
$\{(\Ll-z)^{-1}f,f\}\in \widehat{\Ll}- z$ and $\ker ( \widehat{\Ll}- z) = \{0\}$ with
$$
\ran ( \widehat{\Ll}- z) = \dom(\Ll).
$$
It is straight-forward to see that
$$
z\in \rho(\overline{\Ll'}).
$$
Therefore, $\overline{\Ll'}$ is selfadjoint in the Pontryagin space $H$ with
 nonempty resolvent set. By \cite{DS2}, it follows that $\overline{\Ll'}$ is definitizable. 

Let $E_{\overline{\Ll'}}$ be the spectral function of $\overline{\Ll'}$, see \cite{KP15} and choose a bounded spectral set $[a,b] \subset (0,\infty)$ of $\overline{\Ll'}$ such that $[a,b]$ contains exactly one zero $\lambda.$ Thus, $(E_{\overline{\Ll'}}([a,b])X,[\cdot, \cdot])$ is a Pontryagin space and the rank of negativity of this space
is $\kappa_-(\lambda)+\kappa_0(\lambda)>0$. A similar statement holds for the negative zeros. The algebraic eigenspace corresponding to nonreal eigenvalues $\mu_i$ is neutral with respect to $[\cdot, \cdot]$ and the rank of negativity of $(E_{\overline{\Ll'}}(\{\mu_i, \overline{\mu}_i\})X,[\cdot, \cdot])$ is $\kappa_0(\mu_i)$. All these spaces are orthogonal to each other, hence its orthogonal sum is a gain in a Pontrygain space and a subspace of the underlying Pontryagin space $H$, which shows \eqref{Fertig}.

The second item follows from \eqref{Fertig} and the fact that in a Jordan chain of length $2\kappa +1$ at least $\kappa$ elements are neutral, which is shown exactly in the same way as for operators.
\end{proof}
%\begin{rem}
 %   Let $\Ll$ be a nonnegative linear relation in a Krein space $(X, [\cdot, \cdot])$, then $\sigma(\Ll) \subseteq \R.$ Moreover, by \cite[Proposition 5.2]{DS2} the Jordan chains of $\Ll$ at 0 are of length at most 2. 
%\end{rem}
%We now turn to the proof of Theorem \ref{Theorem 4.5}.
\begin{proof}[Proof of Theorem \ref{Theorem 4.5}]
If $AJE$ is nonnegative then by Theorem \ref{Linear relation properties} $\Ll$ is nonnegative with respect to $[\cdot, \cdot]$. Every nonnegative selfadjoint relation
(with non empty resolvent set) in a Krein space has real spectrum. Together with \eqref{Arnstadt} this implies that $\sigma(\A) \ \hbox{is real}$.

To prove the second claim assume that the negative spectrum of $AJE$ consists only of finitely many eigenvalues with finite multiplicity.  By Theorem \ref{weyr}, it follows that the Weyr characteristics of the linear relation $\Ll$ and the operator pencil $\A(\lambda)$ coincide. Hence, they share the same Jordan chains, including their number and lengths.  Thus, the claim follows from Lemma \ref{Lemma4}
and  Theorem \ref{weyr}.
\end{proof}

\section{A special case and an example}

\begin{thm} Let $A$ be a bounded selfadjoint operator in the Hilbert space $X$. If there exists $\lambda \in \rho(\mathcal{L}) \cap \mathbb R$ with $\lambda \geq 1$ and assume that $A$ is nonnegative, and assume $E=I+A$. Then, $\mathcal{L}$ is nonnegative. 
\end{thm}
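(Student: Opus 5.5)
The plan is to reduce the claim to the nonnegativity of the operator $AJE$, using the second assertion of Theorem~\ref{Linear relation properties}. That theorem says $\mathcal{L}$ is nonnegative with respect to $[\cdot,\cdot]$ if and only if $AJE$ is nonnegative. Here $J=(\lambda E-A)^{-1}$ and $\lambda\in\rho(\mathcal{L})\cap\mathbb R$. So it suffices to show $AJE\geq 0$ whenever $E=I+A$, $A=A^*\geq 0$ and $\lambda\geq 1$.

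First I would write everything as a function of the single selfadjoint operator $A$. Since $E=I+A$, we get
$$
\lambda E-A=\lambda I+(\lambda-1)A .
$$
This is a selfadjoint operator. Because $A\geq 0$ and $\lambda-1\geq 0$, it satisfies $\lambda I+(\lambda-1)A\geq \lambda I\geq I$. In particular it is boundedly invertible, which is consistent with the hypothesis $\lambda\in\rho(\mathcal{L})$ (via Theorem~\ref{weyr}). Hence
$$
J=\bigl(\lambda I+(\lambda-1)A\bigr)^{-1}
$$
is a bounded Borel function of $A$. Therefore $A$, $E$ and $J$ all commute, and
$$
AJE=f(A),\qquad f(t)=\frac{t(1+t)}{\lambda+(\lambda-1)t}.
$$

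Next I would apply the spectral theorem for $A$. We have $\sigma(A)\subset[0,\|A\|]$. For $t\geq 0$ and $\lambda\geq 1$, the numerator $t(1+t)$ is nonnegative. The denominator satisfies $\lambda+(\lambda-1)t\geq\lambda\geq 1>0$. Thus $f$ is continuous and nonnegative on $\sigma(A)$, so $f(A)\geq 0$. A more elementary alternative, avoiding the functional calculus, is to write
$$
(AJEx,x)=\bigl(J^{1/2}A^{1/2}(I+A)^{1/2}x,\;J^{1/2}A^{1/2}(I+A)^{1/2}x\bigr)\geq 0 .
$$
This uses the commuting nonnegative square roots $J^{1/2}$, $A^{1/2}$ and $(I+A)^{1/2}$, all of which are available because $J\geq 0$ here.

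Finally, Theorem~\ref{Linear relation properties}(2) then gives that $\mathcal{L}$ is nonnegative. The only step needing some care, and the one I expect to be the main obstacle, is justifying that $J$ commutes with $A$ and $E$ and that $AJE$ equals $f(A)$. This follows because $J$ is the inverse of a polynomial in $A$: any operator commuting with an invertible operator also commutes with its inverse. Everything else is a sign check on the scalar function $f$.
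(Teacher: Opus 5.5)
Your proposal is correct and matches the paper's proof. Both write $\lambda E-A=\lambda I+(\lambda-1)A$ and express $AJE$ through the spectral theorem for $A$ as the function $t(1+t)/(\lambda+(\lambda-1)t)$, which is nonnegative on $[0,\infty)$ when $\lambda\geq 1$; both then conclude with Theorem~\ref{Linear relation properties}(2).
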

\begin{proof}
    Let $E= I+A$ then, with $J=(\lambda I+ (\lambda -1)A)^{-1}$, we obtain for $z\in X$
\begin{eqnarray*}
    ( AJEz, z) &=& (A(\lambda I+ (\lambda -1)A)^{-1}(I+A)z, z) \\[1ex]
    &=& \int_{0}^\infty \frac{t}{\lambda + (\lambda -1)t}(1+t)d( E_tz,z),
\end{eqnarray*}
where $E_t$ denotes the spectral function of the nonnegative operator $A$.
Therefore, it is clear that $( AJEz, z)\geq 0$ and hence $\Ll$ is nonnegative.
\end{proof}

\begin{ex} Consider the
linear relation in range representation
    \[
\mathcal{L}=\ran \begin{bmatrix}
    E\\ A
\end{bmatrix},
\]
with $$E=\begin{bmatrix}
     1&0 & 0\\0&1&0 \\ 0& 0&0 
\end{bmatrix} \ \hbox{and} \ \ A=\begin{bmatrix}
    2&i&0\\-i&-1&0 \\ 0 & 0&1
\end{bmatrix}.$$ 
Therefore, both $E$ and $A$ are selfadjoint operators. Moreover, $$\lambda E-A=  \begin{bmatrix}
    \lambda -2 & -i  &0\\i& \lambda +1 & 0 \\ 0 & 0& -1
\end{bmatrix}. $$ Thus, $\det(\lambda E -A)=-\lambda^2+\lambda+3\neq 0$, if and only if $\lambda \neq \frac{1\pm \sqrt{13}}{2}$. Hence $$ J=(\lambda E-A)^{-1}= \frac{1}{-\lambda^2+\lambda+3} \begin{bmatrix}
    -\lambda-1 &-i & 0\\i& -\lambda +2 & 0 \\ 0 & 0 & \lambda^2 -\lambda -3
\end{bmatrix}.$$
From this, we see that 
$$ 
AJE=\begin{bmatrix}
    \frac{2\lambda+3}{\lambda^2-\lambda-3}&  \frac{i\lambda}{\lambda^2-\lambda-3} & 0\\\frac{-i\lambda}{\lambda^2-\lambda-3}& \frac{-\lambda+3}{\lambda^2-\lambda-3}&0 \\ 0 & 0& 0
\end{bmatrix},
$$
which is selfadjoint for $\lambda \neq \frac{1\pm \sqrt{13}}{2}$. The eigenvalues of the matrix $AJE$ are 
\begin{equation} \label{HelpMe}
\mu_0=0, \ \mu_+= \frac{B_\lambda}{2(\lambda^2-\lambda-3)} \ \hbox{and} \ \mu_-= \frac{S_\lambda}{2(\lambda^2-\lambda-3)}, 
\end{equation}
where 
$$
B_\lambda= (\lambda+6)+\sqrt{13} |\lambda|\quad \mbox{and}\quad S_\lambda=(\lambda+6)-\sqrt{13} |\lambda|.
$$
Note that $B_\lambda$ is positive for all real $\lambda$, whereas $S_\lambda$ changes its sign twice, see the Table
\ref{tab:sign-A-B} below.
\begin{table}[ht] \label{EINS}
\centering
\renewcommand{\arraystretch}{1.4} % increase row height
\caption{Sign analysis of $B_\lambda$ and $S_\lambda$}
\begin{tabular}{lcc}
\hline
  & \textbf{Sign of $B_\lambda$} & \textbf{Sign of $S_\lambda$} \\
\hline
 $\lambda \in \left(-\infty,\frac{-6}{1+\sqrt{13}}\right)$ & $+$ & $-$ \\[1ex]
  $\lambda \in \left(\frac{-6}{1+\sqrt{13}},
 \frac{6}{\sqrt{13}-1}\right) $ & $+$ & $+$ \\[1ex]
 $\lambda \in\left(\frac{6}{\sqrt{13}-1},\infty\right)$ & $+$ & $-$ \\
\hline
\end{tabular} 
\label{tab:sign-A-B}
\end{table}
Finally, the denominator in  \eqref{HelpMe} changes its sign in 
$$
\frac{1-\sqrt{13}}{2} = \frac{-6}{1+\sqrt{13}} \quad \mbox{and in}\quad  
\frac{1+\sqrt{13}}{2} = \frac{6}{\sqrt{13}-1}.
$$

\begin{table}[H]
\centering
\renewcommand{\arraystretch}{1.4}
\caption{Sign analysis of the eigenvalues $\mu_+$ and $\mu_-$ } \vspace{2mm}
\begin{tabular}{lcc}
\hline
 & \textbf{Sign of $\mu_+$} & \textbf{Sign of $\mu_-$} \\
\hline
 $\lambda \in \left(-\infty,\frac{-6}{1+\sqrt{13}}\right)$  & $+$ & $-$ \\[1ex]
$\lambda \in \left(\frac{-6}{1+\sqrt{13}},
 \frac{6}{\sqrt{13}-1}\right) $   & $-$ & $-$ \\[1ex]
$\lambda \in\left(\frac{6}{\sqrt{13}-1},\infty\right)$  
& $+$ & $-$  \\
\hline
\end{tabular}
\label{tab:sign-L}
\end{table}

From  Table \ref{tab:sign-L}, we see that if $\lambda \notin \left(\frac{-6}{1+\sqrt{13}},
 \frac{6}{\sqrt{13}-1}\right)$ then $AJE$ has one negative square. Otherwise, for $\lambda \in \left(\frac{-6}{1+\sqrt{13}},
 \frac{6}{\sqrt{13}-1}\right)$, the operator $AJE$ is nonpositive and by Theorem \ref{Theorem 4.5} (applied to 
 $-\A(\lambda)$ and $-AJE$) we have
 $$
 \sigma(\A) \mbox{ is real.}
 $$
\end{ex}

\subsection*{Data Availability Statement}
 No data were collected, generated or consulted
in connection with this research.

\subsection*{Declarations}

\subsection*{Conflicts of Interest}
 The authors have no Conflict of interest to declare that
are relevant to the content of this article.


\begin{thebibliography}{1}
\bibitem{ALM01} V.\ Adamyan, H.\ Langer, and M.\ M\"{o}ller, \textit{Compact perturbation of definite type spectra of self-adjoint quadratic operator pencils.} Integral Equations Operator Theory, \textbf{39} (2001), no. 2, 127--152.

\bibitem{A61} R. Arens, \textit{Operational calculus of linear relations.} Pac.\ J.\ Math. \textbf{11} (1961), 9--23.

\bibitem{AI89}
T.Y. Azizov and I.S. Iokhvidov, \textit{Linear operators in Space with an Indefinite Metric.} Chichester: John Wiley \& Sons Ltd., 1989.

\bibitem{AI72}
T.Y.\ Azizov and I.S.\ Iokhvidov, \textit{Linear operators in Hilbert spaces with $G$-metric.} Russ.\ Math.\ Surv.\ \textbf{26} (1972), 45--97.

\bibitem{BLT13} J. Behrndt, A. Luger, and  C. Trunk, \textit{On the negative squares of a class of self-adjoint extensions in Krein spaces.} Math.\ Nachr.\ \textbf{286} (2013), 118--148.

\bibitem{BT07} J. Behrndt and  C. Trunk,
\textit{On the negative squares of indefinite Sturm–Liouville
operators.} J.\ Differential Equations \textbf{238} (2007), 491–519.

%\bibitem{BP18}  J.  Behrndt and F.  Philipp, \textit{Finite rank perturbations in Pontryagin spaces and a Sturm-Liouville problem with $\lambda$-rational boundary conditions.} Indefinite Inner Product Spaces, Schur Analysis, and Differential Equations: A Volume Dedicated to Heinz Langer (2018), 163--189.

\bibitem{B74} J. Bognar, \textit{Indefinite Inner Product Spaces.} Springer, (1974).

%\bibitem{D98} V. Derkach, \textit{On Krein space symmetric linear relations with gaps}. Methods Funct. Anal. Topology \textbf{4} (1998), no. 2, 16--40.

\bibitem{DS1}
A. Dijksma and H.S.V. de Snoo,  \textit{Symmetric and selfadjoint relations in Krein Spaces I.} Operator Theory:
Advances and Applications \textbf{24} (1987), Birk\"{a}user Verlag Basel, 145--166.

\bibitem{DS2}
A. Dijksma and H.S.V. de Snoo,  \textit{Symmetric and selfadjoint relations in Krein Spaces II.} Ann.\ Acad.\ Sci.\ Fenn.\ Math.\ \textbf{12} (1987), 199--216.

\bibitem{FY98} A. Favini and A. Yagi. \textit{Degenerate Differential Equations in Banach Spaces.} CRC Press, 1998.

\bibitem{GT21} H. Gernandt and C. Trunk,  \textit{The spectrum and the Weyr characteristics of operator pencils and linear relations.} Syphax Journal of Mathematics: Nonlinear Analysis, Operator and Systems \textbf{1} (2021), 73--89.

\bibitem{G22}
A. Gheondea,  \textit{An Indefinite Excursion in Operator Theory: Geometric and Spectral Treks
in Krein Spaces.} Cambridge University Press, (2022).

\bibitem{IKL}
 I.S. Iohvidov, M.G. Krein, and H. Langer,  \textit{
 Introduction to the Spectral Theory of Operators in Spaces with an Indefinite Metric.} Akademie-Verlag, Berlin, (1982).


\bibitem{JL07}
P. Jonas and H. Langer.  \textit{On the spectrum of the self-adjoint extensions of a nonnegative linear relation of defect one in a Krein space. Operator theory in inner product spaces.} Oper.\ Theory Adv.\ Appl.\ Birkhäuser, Basel, \textbf{175} (2007), 121--158.

\bibitem{KP15}
M.\ Kaltenb\"{a}ck and R.\ Pruckner, 
\textit{Functional calculus for definitizable self-adjoint linear relations on {Krein} spaces},
Integral Equations Oper. Theory \textbf{83},
(2015), no.\ 4, 451--482.

%\bibitem{K95} T. Kato,  \textit{Perturbation theory for linear operators.} Classics in Mathematics, Springer, Berlin, 1995, Reprint of the 1980 edition.

\bibitem{K51}
M.V.\ Keldysh, \textit{On the eigenvalues and eigenfanctions of certain classes of nonselfadjoint equations.} Dokl. Akad. Nauk SSSR77 (1951), 11--14.



\bibitem{KM06}
P.~Kunkel and V.L.~Mehrmann,
\textit{Differential-Algebraic Equations. Analysis and Numerical Solution}, EMS Publishing House, Z{\"u}rich, Switzerland, 2006.

\bibitem{LMT13}
R.~Lamour, R.~M{\"a}rz, and C.~Tischendorf.
\textit{Differential Algebraic Equations: A Projector Based Analysis}, vol.~1 of {\em Differential-Algebraic Equations Forum}, Springer-Verlag, Heidelberg-Berlin, 2013.


\bibitem{KL78I} M.G.\ Krein and H. Langer, \textit{On some mathematical principles in the linear theory of damped oscillations of continua. I.} Translated from the Russian by R. Troelstra. Integral Equations Operator Theory \textbf{1} (1978), no. 3, 364--399.

\bibitem{KL78II} M.G.\ Krein and H.\ Langer, \textit{On some mathematical principles in the linear theory of damped oscillations of continua. II.}  Translated from the Russian by R. Troelstra. Integral Equations Operator Theory \textbf{1} (1978), no. 4, 539--566.

\bibitem{Lhabil} H.~Langer, 
\textit{Spektraltheorie linearer Operatoren in J-R\"{a}umen und 
einige Anwendungen auf die Schar $L(\lambda) = \lambda^2 + \lambda B + C$},
Habilitationsschrift, Technische Universit\"{a}t Dresden, 1965.

\bibitem{L67} H. Langer,
\textit{\"{U}ber stark ged\"{a}mpfte Scharen im Hilbertraum},
J. Math. Mech.\ \textbf{17} (1968), 685--705.

\bibitem{L73} H.\ Langer, \textit{\"{U}ber eine Klasse polynomialer Scharen selbstadjungierter Operatoren im Hilbertraum. II.},
J.\ Funct.\ Anal.\ \textbf{12} (1973), 13--29.

\bibitem{L74} H.\ Langer, 
\textit{\"{U}ber eine Klasse polynomialer Scharen selbstadjungierter Operatoren im Hilbertraum}, 
J.\ Funct.\ Anal.\ \textbf{16} (1974), 221--234.

\bibitem{L75} H.\ Langer,
\textit{Zur Spektraltheorie polynomialer Scharen selbstadjungierter Operatoren},
Math.\ Nachr.\ \textbf{65} (1975), 301--319.

\bibitem{L76} H.\ Langer, \textit{Factorization of operator pencils}, Acta Sci. Math.\ \textbf{38} (1976), 83--96.

\bibitem{L82} H. Langer,  \textit{Spectral functions of definitizable operators in Krein spaces.} Lect. Notes Math.\ \textbf{948} (1982), 1--46.

\bibitem{LNT} H.\ Langer, B.\ Najman, and C.\ Tretter, \textit{Spectral theory of the Klein-Gordon equation in Krein spaces}, Proc.\ Edinb.\ Math.\ Soc.\ \textbf{51} (2008), no. 3, 711--750.

\bibitem{LT77} H. Langer and B. Textorius, \textit{ On generalized resolvents and $Q$-functions of symmetric linear relations (subspaces) in Hilbert space.} Pacific J.\ Math., \textbf{72} (1977), no. 1, 135--165.


\bibitem{M88} A.S.\ Markus, \textit{Introduction to the Spectral Theory of Operator Polynomials.} AMS Trans. Monographs, Providence, RI, 1988.

\bibitem{MV15}
M. Möller and V. Pivovarchik. \textit{Spectral theory of operator pencils, Hermite-Biehler functions, and their applications. Operator Theory.} Advances and Applications, \textbf{246} Birkhäuser/Springer, Cham, (2015).

\bibitem{RT05} T.\ Reis and C.\ Tischendorf, \textit{Frequency domain methods and decoupling of linear infinite dimensional differential algebraic systems.} J. Evol. Equ.\ \textbf{5} (2005), no. 3, 357--385.

\bibitem{TW19}
S. Trostorff and M. Waurick, \textit{On differential-algebraic equations in infinite dimensions}, J.\ Differ. Equations \textbf{266} (2019), 526--561.
\end{thebibliography}
\end{document}